\newtheorem{proposition}{Proposition}
\newtheorem{theorem}[proposition]{Theorem}
\newtheorem{lemma}[proposition]{Lemma}
\theoremstyle{definition}
\newtheorem{remark}[proposition]{Remark}
\title[A certain family of integrals]
{A direct computation of\\ a certain family of integrals}
\author[L. Fornari, E. Laeng and V. Pata]{Lorenzo fornari, Enrico Laeng and Vittorino Pata}
\address{Politecnico di Milano - Dipartimento di Matematica
\newline\indent
Via Bonardi 9, 20133 Milano, Italy}
\email{lorenzo.fornari@polimi.it {\rm (L. Fornari)}}
\email{enrico.laeng@polimi.it {\rm (E. Laeng)}}
\email{vittorino.pata@polimi.it {\rm (V. Pata)}}
\begin{document}

\begin{abstract}
We propose a rather elementary method to compute
a certain family of integrals on the half line,
depending on the integer parameters $n\geq q\geq 1$.
\end{abstract}

\maketitle

%%%%%%%%%%%%%%%%%%%%%%%%%%%%%%%%%%%%%%%%%%%%%%%%%%%%%%%%%%%%%%

\noindent
In this note, let $n\geq q\geq 1$ be any two given integers.
The symbol $\lfloor\cdot\rfloor$ will stand, as usual, for the integer part.
We consider the family of integrals
$$I_{n,q}=\int_{0}^\infty\frac{(\sin x)^n}{x^q}\, dx.$$

\begin{theorem}
\label{UNO}
The following formulae hold:
\begin{itemize}
\item[(i)] If $n+q$ is even, then
$$
I_{n,q}
=\frac{(-1)^{\frac{q-n}2}\pi}{2^{n}(q-1)!}\sum_{k=0}^{\lfloor\frac{n-1}2\rfloor}(-1)^k
{\textstyle \bigg(\begin{matrix}
n \\
k\end{matrix}\bigg)}(n-2k)^{q-1}.
$$
\item[(ii)] If $n+q$ is odd and $q\geq 2$, then
$$
I_{n,q}
={\displaystyle \frac{(-1)^{\frac{q-n+1}2}}{2^{n-1}(q-1)!}\sum_{k=0}^{\lfloor\frac{n-1}2\rfloor}}(-1)^k
{\textstyle \bigg(\begin{matrix}
n \\
k\end{matrix}\bigg)}(n-2k)^{q-1}\log(n-2k).
$$
\end{itemize}
\end{theorem}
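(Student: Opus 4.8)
The plan is to strip off the factor $x^{-q}$ by repeated integration by parts, reducing everything to the single classical Dirichlet-type integral against $x^{-1}$. Writing $F(x)=(\sin x)^n$, I note that $F^{(j)}(x)=O(x^{n-j})$ near the origin and that each derivative $F^{(j)}$ is bounded on $[0,\infty)$. Hence, for $q\ge 2$, integrating by parts with $x^{-q}=-\frac{1}{q-1}\frac{d}{dx}x^{-(q-1)}$ produces no boundary contribution: at infinity because $q-1\ge 1$, and at the origin precisely because $n\ge q$. Iterating $q-1$ times yields
\[
I_{n,q}=\frac{1}{(q-1)!}\int_0^\infty\frac{F^{(q-1)}(x)}{x}\,dx,
\]
an identity which trivially persists when $q=1$.

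Next I would expand $F$ through $\sin x=\frac{1}{2i}(e^{ix}-e^{-ix})$, obtaining $F(x)=(2i)^{-n}\sum_{k=0}^n(-1)^k\binom{n}{k}e^{i(n-2k)x}$, and differentiate term by term:
\[
F^{(q-1)}(x)=\frac{i^{q-1}}{(2i)^n}\sum_{k=0}^n(-1)^k\binom{n}{k}(n-2k)^{q-1}e^{i(n-2k)x}.
\]
The key structural step is to pair the index $k$ with $n-k$: since $n-2(n-k)=-(n-2k)$ and $(-1)^{n-k}\binom{n}{n-k}=(-1)^n(-1)^k\binom nk$, the two exponentials combine into $2\cos((n-2k)x)$ when $n+q$ is odd and into $2i\sin((n-2k)x)$ when $n+q$ is even. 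The central index $k=n/2$, present only when $n$ is even, contributes nothing because of the factor $(n-2k)^{q-1}$ (note $q\ge 2$ in every case where such an index occurs). Thus the sum collapses to $k=0,\dots,\lfloor(n-1)/2\rfloor$, exactly the range in the statement, and the parity of $n+q$ dictates whether one meets sines or cosines.

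In case (i) ($n+q$ even) each surviving term contributes $\int_0^\infty\frac{\sin((n-2k)x)}{x}\,dx=\frac\pi2$, since $n-2k>0$ on the whole range; collecting the constant $2i^{q}/(2i)^n$ with this value gives the overall factor $\pi i^{q-n}/2^n=\pi(-1)^{(q-n)/2}/2^n$, and dividing by $(q-1)!$ produces formula (i). In case (ii) ($n+q$ odd) I would first record the moment identity $\sum_{k=0}^n(-1)^k\binom nk(n-2k)^{j}=0$ for $0\le j\le n-1$, which follows from the fact that $(2i\sin x)^n$ vanishes to order $n$ at the origin; applied with $j=q-1\le n-1$ together with the pairing it gives $\sum_{k=0}^{\lfloor(n-1)/2\rfloor}(-1)^k\binom nk(n-2k)^{q-1}=0$. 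This vanishing is exactly what makes the cosine combination integrable and lets me invoke the Frullani-type evaluation $\int_0^\infty\frac{\cos(ax)-\cos(bx)}{x}\,dx=\log(b/a)$, which turns $\sum_k\lambda_k\cos((n-2k)x)$ with $\sum_k\lambda_k=0$ into $-\sum_k\lambda_k\log(n-2k)$. Tracking the constant $i^{q-1-n}=(-1)^{(q-n-1)/2}$ and the extra sign coming from this formula then delivers formula (ii).

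I expect the genuine obstacle to be case (ii): the individual integrals $\int_0^\infty\cos((n-2k)x)/x\,dx$ diverge at both endpoints, so a termwise evaluation is illegitimate, and the interchange of the finite sum with the (only conditionally convergent) integral must be justified by first grouping the cosines into a single integrand whose convergence is guaranteed by the moment identity. The integration-by-parts reduction and the Frullani evaluation are themselves routine once the boundary and convergence bookkeeping is in place; the delicate point is to keep the cancellation $\sum_k\lambda_k=0$ manifest throughout, so that no divergent quantity is ever written down in isolation.
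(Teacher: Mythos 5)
Your proof is correct, and it reaches the result by a genuinely different route from the paper's. You integrate by parts $q-1$ times directly on $(\sin x)^n x^{-q}$, using the order-$n$ zero of $(\sin x)^n$ at the origin (this is exactly where $n\ge q$ enters) to kill every boundary term, and only afterwards expand $F^{(q-1)}$ (in your notation $F=(\sin x)^n$) into complex exponentials. The paper works in the opposite order: it first expands $(\sin x)^n$ into exponentials, regularizes each term by subtracting the Maclaurin polynomial $P_{q-2}(i\alpha x)$ --- legitimate because the sum of these polynomials is the order-$(q-2)$ Taylor polynomial of $(\sin x)^n$, which vanishes identically, again by $n\ge q$ --- and then evaluates each regularized term by an inductive integration-by-parts lemma producing ${\mathsf E}_\varepsilon(\alpha)$ plus constants $c_q\alpha^{q-1}$ that cancel in the sum. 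The endgame is the same in both arguments: pairing $k$ with $n-k$ yields Dirichlet integrals worth $\pi/2$ in the even case, and in the odd case differences of cosine integrals worth $-\log(n-2k)$, made legitimate by the vanishing of $\sum_k(-1)^k\binom{n}{k}(n-2k)^{q-1}$ (your moment identity, the paper's \eqref{shortino}), which both proofs deduce from the same order-$n$ zero at the origin. Your version dispenses with the $\varepsilon$-cutoff, the error terms $\omega(\varepsilon)$ and the constants $c_q$; the price is that you must verify convergence of the intermediate improper integrals $\int_0^\infty F^{(j)}(x)\,x^{j-q}\,dx$ (absolute at infinity for $q-j\ge 2$, conditional for $j=q-1$ since $F^{(q-1)}$ is a trigonometric polynomial with no constant term) and the vanishing of each boundary term --- points you do flag and which do go through. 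You also correctly isolate the one delicate step, namely grouping the cosines before integrating in case (ii); your subtraction of $\cos x$ from each term via $\sum_k\lambda_k=0$ is precisely the paper's replacement of ${\mathsf{Ci}}((n-2k)\varepsilon)$ by ${\mathsf {Ci}}((n-2k)\varepsilon)-{\mathsf {Ci}}(\varepsilon)$.
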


The formulae above are recorded in the Wolfram MathWorld web page titled
\emph{Sinc Function.}\footnote{{\tt https://mathworld.wolfram.com/SincFunction.html}}, which declares the result
``amazing" and ``spectacular".
However, the web page omits the proof, citing a twenty-year-old online paper of S.\ Kogan that seems not to
be available any longer. Nor the proof is reported in any textbook, to the best of our knowledge.

\smallskip
The remaining of the paper is devoted to our proof of Theorem~\ref{UNO}.
To this end, for $m\geq 0$, let
$$P_{m}(x)=\sum_{k=0}^{m}\frac{x^k}{k!}$$
denote the Maclaurin polynomial of $e^x$ of order $m$. We agree to set $P_{-1}=0$.
Let $Q(x)$ be the Maclaurin polynomial of $(\sin x)^n$ of order $q-2$, with
$Q=0$ if $q=1$. Since $(\sin x)^n$
has a zero of order $n$ at $x=0$, it follows that $Q(x)\equiv 0$ for all $n\geq q\geq 1$.
On the other hand, as
$$(\sin x)^n=\frac1{(2i)^n}\sum_{k=0}^{n}(-1)^k\left(\begin{matrix}
n \\
k\end{matrix}\right)e^{i(n-2k)x},
$$
we immediately conclude that
\begin{equation}
\label{QQQ}
Q(x)=\frac1{(2i)^n}\sum_{k=0}^{n}(-1)^k\left(\begin{matrix}
n \\
k\end{matrix}\right)P_{q-2}(i(n-2k)x)=0.
\end{equation}
Subtracting the two sums, we obtain
\begin{align}
\label{form}
I_{n,q}
&=\frac1{(2i)^n}\sum_{k=0}^{\lfloor\frac{n-1}2\rfloor}(-1)^k\left(\begin{matrix}
n \\
k\end{matrix}\right)\int_{0}^\infty\frac{e^{i(n-2k)x}-P_{q-2}(i(n-2k)x)}{x^q}\, dx\\
&\quad +\frac{(-1)^n}{(2i)^n}\sum_{k=0}^{\lfloor\frac{n-1}2\rfloor}(-1)^k\left(\begin{matrix}
n \\
k\end{matrix}\right)\int_{0}^\infty\frac{e^{-i(n-2k)x}-P_{q-2}(-i(n-2k)x)}{x^q}\, dx.\notag
\end{align}

\begin{remark} From \eqref{QQQ}, we also deduce that
the equality
\begin{equation}
\label{shortino}
\sum_{k=0}^{\lfloor\frac{n-1}2\rfloor}(-1)^k
{\textstyle \bigg(\begin{matrix}
n \\
k\end{matrix}\bigg)}(n-2k)^{q-1}=0,
\end{equation}
holds for every $n> q\geq 2$ whenever $n+q$ is odd.
\end{remark}

We now start from formula \eqref{form}, but considering the integral on $(\varepsilon,\infty)$,
and only at the end we will take the limit $\varepsilon\to 0$. This allows us to move the integral
inside the sum. In what follows $\omega(\varepsilon)$ will
denote a generic function of $\varepsilon$, vanishing at zero as $\varepsilon\to 0$.
Moreover, for $\alpha\neq 0$, let us define
$${\mathsf E}_\varepsilon(\alpha)=\int_\varepsilon^\infty \frac{e^{i\alpha x}}{x}\,dx.
$$

\begin{lemma}
\label{Z1}
For every $q\geq 1$, every $\varepsilon>0$ and every $\alpha\neq 0$, we have
$$\int_{\varepsilon}^\infty\frac{e^{i \alpha x}-P_{q-2}(i\alpha x)}{x^q}\, dx
=c_q\alpha^{q-1}+\frac{(i\alpha)^{q-1}}{(q-1)!}{\mathsf E}_\varepsilon(\alpha)+\omega(\varepsilon),
$$
where
$c_q=\frac{i^{q-1}}{(q-1)!}\sum_{k=0}^{q-2}\frac{1}{k+1}$ for $q\geq 2$ and $c_{1}=0$.
\end{lemma}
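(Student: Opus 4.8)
The plan is to argue by induction on $q$, with the recursion generated by a single integration by parts. Write $f_q(x)=e^{i\alpha x}-P_{q-2}(i\alpha x)$ for the numerator, and abbreviate the integral in question by $J_q(\varepsilon)=\int_\varepsilon^\infty x^{-q}f_q(x)\,dx$. The base case $q=1$ is immediate: there $P_{-1}=0$, so $J_1(\varepsilon)={\mathsf E}_\varepsilon(\alpha)$, which is exactly the asserted expression since $c_1=0$ and $(i\alpha)^0/0!=1$, with $\omega(\varepsilon)\equiv 0$.

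For the inductive step the decisive algebraic observation is that differentiating the numerator lowers the index: since $P_{q-2}'=P_{q-3}$, one has $f_q'(x)=i\alpha\, f_{q-1}(x)$. I would therefore integrate $J_q(\varepsilon)$ by parts on $(\varepsilon,R)$, taking the antiderivative $x^{1-q}/(1-q)$ of $x^{-q}$ (legitimate since $q\geq 2$ in this step), and then let $R\to\infty$. At infinity the boundary term $f_q(x)x^{1-q}$ tends to $0$, because $e^{i\alpha x}x^{1-q}\to 0$ and the top-degree term of $P_{q-2}(i\alpha x)\,x^{1-q}$ is $O(x^{-1})$, so the passage $R\to\infty$ is harmless, the surviving integral being a multiple of $J_{q-1}(\varepsilon)$, whose convergence is granted by the previous case. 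At the lower endpoint I would use that $f_q$ vanishes to order $q-1$ at the origin, with $f_q(x)=\tfrac{(i\alpha)^{q-1}}{(q-1)!}x^{q-1}+O(x^q)$; hence $f_q(\varepsilon)\,\varepsilon^{1-q}=\tfrac{(i\alpha)^{q-1}}{(q-1)!}+\omega(\varepsilon)$, and the endpoint contribution equals $\tfrac{(i\alpha)^{q-1}}{(q-1)!\,(q-1)}+\omega(\varepsilon)$. The relation $f_q'=i\alpha f_{q-1}$ turns the remaining integral into $\tfrac{i\alpha}{q-1}J_{q-1}(\varepsilon)$, producing the recursion
$$
J_q(\varepsilon)=\frac{(i\alpha)^{q-1}}{(q-1)!\,(q-1)}+\frac{i\alpha}{q-1}\,J_{q-1}(\varepsilon)+\omega(\varepsilon).
$$

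Finally I would insert the inductive hypothesis for $J_{q-1}(\varepsilon)$. The coefficient of ${\mathsf E}_\varepsilon(\alpha)$ updates correctly because $\tfrac{i\alpha}{q-1}\cdot\tfrac{(i\alpha)^{q-2}}{(q-2)!}=\tfrac{(i\alpha)^{q-1}}{(q-1)!}$, while the polynomial-in-$\alpha$ part becomes $\tfrac{i^{q-1}}{(q-1)!}\big(\tfrac1{q-1}+\sum_{k=0}^{q-3}\tfrac1{k+1}\big)\alpha^{q-1}$; the bracket collapses to $\sum_{k=0}^{q-2}\tfrac1{k+1}$, which is precisely $c_q$ up to the prefactor. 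I expect the only genuinely delicate point to be the careful bookkeeping of the $\omega(\varepsilon)$ terms—checking that both the subleading part of the endpoint expansion and the error in $f_q(\varepsilon)\,\varepsilon^{1-q}$ vanish as $\varepsilon\to 0$—together with the routine verification that the boundary term at infinity disappears and the integral converges there. By contrast, the telescoping of the harmonic sums is the pleasant step that makes the induction close cleanly.
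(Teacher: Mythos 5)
Your proposal is correct and follows essentially the same route as the paper: induction on $q$ with base case ${\mathsf E}_\varepsilon(\alpha)$, a single integration by parts exploiting $P'_{q-2}=P_{q-3}$ to reduce the index, the observation that the boundary term at $\varepsilon$ contributes $\frac{(i\alpha)^{q-1}}{(q-1)!(q-1)}+\omega(\varepsilon)$, and the telescoping of the harmonic sum to close the recursion for $c_q$. The only cosmetic difference is that the paper states the step as passing from $q$ to $q+1$ and keeps the endpoint term unexpanded until the end, whereas you descend from $q$ to $q-1$ and expand it immediately.
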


\begin{proof}
The proof goes by induction on $q$.
If $q=1$, equality holds with $\omega(\varepsilon)=0$.
Then, we prove the formula for $q+1$,
assuming it true for $q\geq 1$.
Since
$P'_{q-1}=P_{q-2}$,
an integration by parts yields
$$\int_{\varepsilon}^\infty\frac{e^{i\alpha x}-P_{q-1}(i\alpha x)}{x^{q+1}}\, dx
=\frac{e^{i\alpha\varepsilon}-P_{q-1}(i\alpha\varepsilon)}{q\varepsilon^q}
+\frac{i\alpha}{q}\int_{\varepsilon}^\infty\frac{e^{i\alpha x}-P_{q-2}(i\alpha x)}{x^{q}}\, dx.
$$
By the inductive hypothesis,
$$\frac{i\alpha}{q}\int_{\varepsilon}^\infty\frac{e^{i\alpha x}-P_{q-2}(i\alpha x)}{x^q}\, dx
=\frac{i c_q}{q}\alpha^q+\frac{(i\alpha)^{q}}{q!}{\mathsf E}_\varepsilon(\alpha)+\omega_q(\varepsilon),
$$
for some function $\omega_q$ vanishing at zero.
Noting that
$$\varpi_q(\varepsilon)=-\frac{(i \alpha)^q}{q!\, q}+\frac{e^{i\alpha \varepsilon}-P_{q-1}(i \alpha\varepsilon)}{q\varepsilon^q}
\to 0\quad\text{as }\varepsilon\to 0,
$$
we end up with the equality
$$\int_{\varepsilon}^\infty\frac{e^{i\alpha x}-P_{q-1}(i\alpha x)}{x^{q+1}}\, dx
=\bigg[\frac{i^q}{q!\, q}+\frac{ic_q}{q}\bigg]\alpha^q
+\frac{(i\alpha)^{q}}{q!}{\mathsf E}_\varepsilon(\alpha)+\omega_{q}(\varepsilon)+\varpi_{q}(\varepsilon).
$$
The final observation that
$\frac{i^q}{q!\, q}+\frac{ic_q}{q}=c_{q+1}$
completes the proof.
\end{proof}

\begin{proof}[Proof of Theorem \ref{UNO} for the case $n+q$ even]
Substituting the expression given by Lemma~\ref{Z1} into \eqref{form},
and
noting that
$${\mathsf E}_\varepsilon(n-2k)-{\mathsf E}_\varepsilon(-(n-2k))=2i{\mathsf {Si}}((n-2k)\varepsilon),
$$
where
$${\mathsf {Si}}(t)=\int_t^\infty \frac{\sin x}{x}\,dx$$
is the \emph{SinIntegral} function, we obtain
$$I_{n,q}
={\displaystyle \frac{(-1)^{\frac{q-n}2}}{2^{n-1}(q-1)!}\sum_{k=0}^{\lfloor\frac{n-1}2\rfloor}}(-1)^k
{\textstyle \bigg(\begin{matrix}
n \\
k\end{matrix}\bigg)}(n-2k)^{q-1}{\mathsf {Si}}((n-2k)\varepsilon)+\omega(\varepsilon).
$$
Since
$${\mathsf {Si}}((n-2k)\varepsilon)\to {\mathsf {Si}}(0)=\frac\pi2\quad\text{as }\varepsilon\to 0,$$
the result follows.
\end{proof}

\begin{proof}[Proof of Theorem \ref{UNO} for the case $n+q$ odd]
Again, we substitute the expression given by Lemma~\ref{Z1} into \eqref{form}.
Using \eqref{shortino}, and
noting that
$${\mathsf E}_\varepsilon(n-2k)+{\mathsf E}_\varepsilon(-(n-2k))=2{\mathsf {Ci}}((n-2k)\varepsilon),
$$
where
$${\mathsf {Ci}}(t)=\int_t^\infty \frac{\cos x}{x}\,dx$$
is the \emph{CosIntegral} function, we obtain
$$I_{n,q}
={\displaystyle \frac{(-1)^{\frac{q-n-1}2}}{2^{n-1}(q-1)!}\sum_{k=0}^{\lfloor\frac{n-1}2\rfloor}}(-1)^k
{\textstyle \bigg(\begin{matrix}
n \\
k\end{matrix}\bigg)}(n-2k)^{q-1}{\mathsf {Ci}}((n-2k)\varepsilon)+\omega(\varepsilon).
$$
By a further use of \eqref{shortino}, we can replace ${\mathsf {Ci}}((n-2k)\varepsilon)$ with
$${\mathsf {Ci}}((n-2k)\varepsilon)-{\mathsf {Ci}}(\varepsilon)\to -\log(n-2k)\quad\text{as }\varepsilon\to 0,$$
and a final limit $\varepsilon\to 0$ completes the argument.
\end{proof}
%%%%%%%%%%%%%%%%%%%%%%%%%%%%%%%%%%%%%%%%%%%%%%%%%

\bigskip

%%%%%%%%%%%%%%%%%%%%%%%%%%%%%%%%%%%%%%%%%%%%%%%%%
\end{document}